\documentclass{article}

\newtheorem{fed}{\textbf{Definition}}[section]

\newtheorem{lemma}[fed]{\textbf{Lemma}}

\newtheorem{rem}[fed]{\textbf{Remark}}
\newtheorem{prop}[fed]{\textbf{Proposition}}

\usepackage{amssymb,bbm,graphicx,epsfig,psfrag,epic,eepic,latexsym}
\usepackage{amsmath}
\usepackage{mathrsfs}
\usepackage{epigraph}

\newcommand{\N}{\mathbb{N}}

\newcommand{\Z}{\mathbb{Z}}
\newcommand{\R}{\mathbb{R}}
\newcommand{\C}{\mathbb{C}}

\usepackage{verbatim,xcolor}

\newcommand{\wt}{\widetilde}

\begin{document}
\title{Spectral numbers in Tate Rabinowitz Floer homology}
\author{Kai Cieliebak and Urs Frauenfelder}
\date{}
\maketitle

\begin{center}
{\it To the memory of our friend and colleague Will Merry}
\end{center}

\setlength{\epigraphwidth}{5.5cm}

\epigraph{{\it ``We must be clear that when it comes to atoms, language can be used only as in poetry. The poet, too, is not nearly so concerned with describing facts as with creating images and establishing mental connections."}

\flushright{--- Niels Bohr, {\it About describing atomic models in the language of classical physics}}}

\begin{abstract}
\noindent We show that circle valued spectral values in Tate Rabinowitz Floer
homology give rise to quantized energy levels, which recover the
quantum mechanical spectrum of a system of uncoupled harmonic oscillators.
\end{abstract}

\section{Introduction}

The Rabinowitz action functional is a Lagrange multiplier functional which detects
periodic orbits on a fixed energy hypersurface that can go forward as well as backward
in time. The functional is invariant under the circle action obtained by reparametrisation
of loops. Therefore one can consider its $S^1$-equivariant Rabinowitz Floer homology,
or more generally its Tate Rabinowitz Floer homology in which the generator of the cohomology
of $\mathbb{C}P^\infty$, the classifying space of the circle, is formally inverted.
According to~\cite{albers-cieliebak-frauenfelder}, the Rabinowitz action functional
on the loop space of the complex plane $\C$ can be used as a substitute for the non-existing 
Tate classifying space of the circle. Here, in contrast to above, the circle acts on
the loop space of $\C$ not on the domain but on the target, which gives rise
to a {\em free} action on the critical manifolds of the Rabinowitz action functional. 
\\ \\
Already for a single harmonic oscillator, corresponding to the free loop space of $\C$, the Tate Rabinowitz Floer homology
turns out to be extremely rich. Since the Rabinowitz action functional on the free loop space of $\C$ is also used as Tate classifying space, it actually appears twice.
However, the two circle actions differ: once the circle acts on the domain, and once on the target. Although the harmonic oscillator has just one periodic orbit together
with all its integer iterates as in the case of Tate symplectic homology~\cite{albers-cieliebak-frauenfelder}, this richness arises from infinite staircases consisting of infinitely many iterates of the periodic orbit that are connected by gradient flow lines.
Do these staircases have some physical significance? Witten referred in his seminal paper
\cite{witten}, which greatly influenced Floer \cite{floer}, to gradient flow lines as
\emph{instantons}. Can the critical points be interpreted as some states between which gradient flow lines
can tunnel?
\\ \\
In this paper we examine the spectral numbers of homology classes in Tate Rabinowitz
Floer homology. It turns out that they are usually infinite, and only finite in very special
cases. Tate Rabinowitz Floer homology of $\C$ is endowed with a $\mathbb{Z}$-action
which gives it the structure of a $\mathbb{Z}[\mathbb{Z}]$-module. The $\mathbb{Z}$-action
does not preserve the spectral number but shifts it.
Given a constant $\hbar>0$, we address the following question.
\\ \\
\textbf{Question. } \emph{Under which conditions can we associate to a $\mathbb{Z}[\mathbb{Z}]$-equivalence class of a nonvanishing homology class in Tate Rabinowitz Floer homology of $\C$
a spectral number in $S^1=\mathbb{R}/\hbar\mathbb{Z}$?}
\\ \\
As we shall see, this can only be done for a discrete set of energy values, thus giving
rise to some kind of quantized energy values. We refer to these 
values as \emph{Tate Rabinowitz quantized energy values}. We then compute
them for the harmonic oscillator.
\\ \\
\textbf{Theorem\,$\mathrm{A}$. } \emph{The Tate Rabinowitz quantized energy values for
the harmonic oscillator are given by $(n-\tfrac{1}{2})\hbar$}, where $n \in \mathbb{N}$.
\\ \\
These energy values coincide with the eigenvalues of the Schr\"odinger operator
for the harmonic oscillator \cite{schroedinger}. To obtain Theorem\,A, it is important
to define the spectral numbers with the help of the action value of the
Rabinowitz action functional corrected by a quarter of the Maslov index,
see (\ref{specnumb}) for the precise formula. This is reminiscent of the 
semiclassical approach to quantization, where in Gutzwiller's famous trace formula
\cite{gutzwiller} a correction by a quarter of the Maslov index appears as well. 
\\ \\
We next address the question of several harmonic oscillators,
which is of importance in semiclassics as an analytical test
of Gutzwiller's trace formula \cite{brack-jain, doll-ingold}. In fact,
the classical orbits in this case
are Lissajous curves, which are usually not periodic
unless one considers the isotropic case. For $m$ harmonic oscillators, each
harmonic oscillator can be reparametrised individually, so that the Rabinowitz action
functional is invariant under an action of the $m$-dimensional torus $T^m$. The
corresponding Tate Rabinowitz Floer homology becomes a module over the group ring
$\mathbb{Z}[\mathbb{Z}^m]$. For the Tate Rabinowitz quantized energy values
in this case one obtains the following generalization of Theorem\,A.
\\ \\
\textbf{Theorem\,$\mathrm{B}$. }
\emph{The Tate Rabinowitz quantized energy values for
$m$ harmonic oscillators of frequencies $\varpi_i$ for $1 \leq i \leq m$ are given by
$$E_n=\sum_{i=1}^m\big(n_i-\tfrac{1}{2}\big)\varpi_i\hbar$$
where $n=(n_1,\ldots,n_m) \in \mathbb{N}^m$.
} 
\\ \\
For $m=1$ and $\varpi_1=1$, Theorem\,$\mathrm{B}$
reduces to Theorem\,A. Again, the Tate Rabinowitz quantized energy values
coincide with the eigenvalues of the Schr\"odinger operator and the semiclassical
predictions based on Gutzwiller's trace formula \cite{brack-jain, doll-ingold}. 
\\ \\
It is interesting to compare the semiclassical approach with the approach via Tate
Rabinowitz homology. In both approaches periodic orbits play a crucial role. Besides
the Maslov index, in Gutzwiller's trace formula additional local invariants of periodic
orbits such as Floquet multipliers enter. Floer homology does not really see the Floquet
multipliers. In fact, the classical Morse Lemma \cite{milnor} tells us that locally around 
a critical point the only invariants are its action and its index. On the other hand,
Tate Rabinowitz Floer homology involves instantons, which do not play a role in
Gutzwiller's trace formula. From a more global perspective, it could be possible
to recover some Floquet multipliers from Floer theoretic data. For instance, in the case
of elliptic periodic orbits, 
index iteration shows that the Floquet multipliers correspond to the mean index 
\cite{long}. As for the mean index one does not really need a flow but just a Fredholm
theory \cite{frauenfelder-weber}, it should be possible to
define Tate Rabinowitz quantized energy values also for Hamiltonian
delay equations. This is of special interest for the semiclassical study of the helium
atom \cite{cieliebak-frauenfelder-volkov}.
\\ \\
Theorem\,A is proved at the end of Section~\ref{harm}, and Theorem\,$\mathrm{B}$ at
the end of Section~\ref{sevharm}.

\section{The Rabinowitz action functional}

Let $(M,\omega)$ be a symplectic manifold. For simplicity we assume that
$\omega$ is exact, i.e., $\omega=d\lambda$ for a one-form $\lambda$. With the circle
$S^1=\mathbb{R}/\mathbb{Z}$, we denote by
$$\mathcal{L}:=\mathcal{L}_M:=C^\infty(S^1,M)$$
the free loop space of $M$. Given a smooth function
$H \colon M \to \mathbb{R}$, the Rabinowitz action functional is defined by
$$\mathcal{A}^H \colon \mathcal{L}\times \mathbb{R}\to\R, \qquad
(v,\tau) \mapsto \int_{S^1} v^*\lambda-\tau \int_0^1 H(v(t))dt.$$
Although Moser declared this functional as ``certainly not suitable for an existence proof"
\cite{moser}, it was soon afterwards used by Rabinowitz to prove existence of periodic orbits in
his groundbreaking work \cite{rabinowitz}. One may think of this functional as the
Lagrange multiplier functional of the area functional for the constraint given by the mean value
of the Hamiltonian~\cite{cieliebak}. If we define the Hamiltonian
vector field of $H$ implicitly by the condition 
$$dH=\omega(\cdot,X_H),$$
then the critical points of $\mathcal{A}^H$ are solutions of the problem
\begin{eqnarray}\label{crit}
\left\{\begin{array}{c}
\partial_t v(t)=\tau X_H(v(t)),\\
H(v(t))=0,
\end{array}\right.\qquad t \in S^1.
\end{eqnarray}
For $\tau>0$ this corresponds after reparametrization to a periodic orbit
of period $\tau$ on the energy hypersurface $\Sigma=H^{-1}(0)$. For
$\tau=0$ the orbit is a constant point on $\Sigma$, and for $\tau<0$
the orbit is travelled backwards and has period $-\tau$.
This reminds one
in a mysterious way of the Feynman-Stueckelberg interpretation of a positron being an electron 
moving backwards in time \cite{feynman,stueckelberg}. Since periodic orbits
can also go backward in time, the action is in general neither bounded from above nor below.
Algebraically, this has the effect that one can take different
completions in order to define various flavours of 
Rabinowitz Floer homology \cite{cieliebak-frauenfelder2}.
\\ \\
The Rabinowitz action functional is invariant under the
reparametrization $S^1$ action 
\begin{equation}\label{domact}
S^1 \times \mathcal{L} \times \mathbb{R} \to \mathcal{L} \times \mathbb{R}, 
\qquad (\rho,v,\tau) \mapsto (\rho_*v,\tau)
\end{equation}
where for $\rho \in S^1$ and $v \in \mathcal{L}$ the reparametrized loop is defined by
$$\rho_*v(t)=v(t+\rho), \qquad t \in S^1.$$
Moreover, it is antiinvariant under the involution
$$I \colon \mathcal{L} \times \mathbb{R} \to \mathcal{L} \times \mathbb{R}, \qquad
(v,\tau) \mapsto (v^-,-\tau),$$
where $v^-$ is obtained from $v$ by traveling backwards,
$$v^-(t)=v(-t), \qquad t \in S^1.$$
Here antiinvariance means that
$$\mathcal{A}^H \circ I=-\mathcal{A}^H.$$
There are two grading conventions on Rabinowitz Floer homology: one half-integer valued 
 \cite{cieliebak-frauenfelder1} for which the grading is also
 antiinvariant under the involution $I$, and
one integer valued \cite{cieliebak-frauenfelder-oancea} which is more
in accordance with symplectic homology.
We will use the latter one.

\section{The Tate Rabinowitz action functional}

A special instance is the case $M=\mathbb{C}$ with its standard symplectic form
$$\omega=dx \wedge dy,$$
$z=x+iy \in \mathbb{C}$, and its standard primitive 
$$\lambda=\frac{1}{2}(xdy-ydx).$$
For $\kappa>0$ we consider the Hamiltonian
$$\mu_\kappa \colon \mathbb{C} \to \mathbb{R}, \qquad z \mapsto \pi |z|^2-\kappa.$$
Its Hamiltonian vector field generates the standard circle action
$$S^1 \times \mathbb{C} \to \mathbb{C}, \qquad (r,z) \mapsto e^{2\pi i r}z$$
on the complex plane. 
For $\ell \in \mathbb{Z}$ we define $w^\ell \in \mathcal{L}_{\mathbb{C}}$
by
\begin{equation}\label{well}
w^\ell (t)=\sqrt{\frac{\kappa}{\pi}}e^{2\pi i t \ell}, \qquad t \in S^1.
\end{equation}
Then we have a natural bijection
\begin{equation}\label{bijec}
\mathbb{Z} \times S^1 \cong
\mathrm{crit}\mathcal{A}^{\mu_\kappa},\qquad (\ell,r) \mapsto \big(e^{2\pi ir }w^\ell, \ell\big).
\end{equation}
The action of a critical point is given by
$$\mathcal{A}^{\mu_\kappa}\big(e^{2\pi i r}w^\ell, \ell\big)=\kappa \ell.$$
In the following we consider $\mathcal{L}_\mathbb{C} \times \mathbb{R}$
endowed with the circle action on the target of a loop,
\begin{equation}\label{taract}
S^1 \times \mathcal{L}_\mathbb{C} \times \mathbb{R}
\to \mathcal{L}_\mathbb{C} \times \mathbb{R}, \qquad 
(r,w,\eta) \mapsto \big(e^{2\pi i r}w,\eta\big),
\end{equation}
in contrast to the circle action (\ref{domact}) for which the circle acts on the
domain. 
\\ \\
For a manifold $M$, we define the
\emph{Tate space} of $M$ as the quotient
$$\mathfrak{T}:=\mathfrak{T}_M:=(\mathcal{L}_M \times
\mathbb{R})\times_{S^1}(\mathcal{L}_\mathbb{C}\times \mathbb{R}),$$ 
where we divide out the diagonal circle action given by
\begin{eqnarray*}
S^1 \times (\mathcal{L}_M \times \mathbb{R})\times(\mathcal{L}_\mathbb{C}\times \mathbb{R}) &\to& (\mathcal{L}_M \times \mathbb{R})\times(\mathcal{L}_\mathbb{C}\times \mathbb{R})\\
(\rho,v,\tau,w,\eta) &\mapsto& \big(\rho_*v,\tau,e^{2\pi i\rho}w,\eta\big).
\end{eqnarray*}
Note that while the circle acts on the loop space of $M$ on the domain, it acts
on the loop space of $\mathbb{C}$ on the target. 
\\ \\
For an exact symplectic manifold $(M,\omega=d\lambda)$ and a smooth
function $H\colon M \to \mathbb{R}$, the \emph{Tate Rabinowitz action
functional} for $H$ with respect to $\kappa>0$ is defined as 
$$\mathfrak{A}^H_\kappa \colon \mathfrak{T} \to \mathbb{R}, \qquad
[v,\tau,w,\eta] \to \mathcal{A}^H(v,\tau)+\mathcal{A}^{\mu_\kappa}(w,\eta).$$
This is well-defined because both $\mathcal{A}^H$ and $\mathcal{A}^{\mu_\kappa}$ are invariant
under the circle actions.

\section{The harmonic oscillator}\label{harm}

We consider the special case of the symplectic manifold $M=\mathbb{C}$ endowed
with its standard symplectic structure. In this case the Tate space is 
$$\mathfrak{T}_\mathbb{C}=(\mathcal{L}_\mathbb{C} \times \mathbb{R})
\times_{S^1} (\mathcal{L}_\mathbb{C}\times \mathbb{R}).$$
Recall that the circle actions on the two loop spaces of $\mathbb{C}$ differ:
in the first component the circle acts on the domain, whereas in the second component it
acts on the target. For $\kappa>0$ and $E>0$ we consider the Tate Rabinowitz action functional
$$\mathfrak{A}^E_\kappa:=\mathfrak{A}^{\mu_E}_\kappa \colon 
\mathfrak{T}_\mathbb{C} \to \mathbb{R}.$$
In analogy to $w^\ell \in \mathcal{L}_\mathbb{C}$ in (\ref{well}),
we define for $\nu \in \mathbb{Z}$ the loops $v^\nu \in
\mathcal{L}_\mathbb{C}$ by
$$v^\nu(t)=\sqrt{\frac{E}{\pi}}e^{2\pi i t \nu}, \qquad t \in S^1.$$
As in (\ref{bijec}) we have a natural bijection
$$\mathbb{Z} \times S^1 \times \mathbb{Z} \times S^1
\cong \mathrm{crit}\mathcal{A}^{\mu_E} \times \mathrm{crit}\mathcal{A}^{\mu_\kappa}
=\mathrm{crit}(\mathcal{A}^{\mu_E} \oplus \mathcal{A}^{\mu_\kappa})$$
given by
$$(\nu,s,\ell,r) \mapsto \big((e^{2\pi i s}v^\nu,\nu),(e^{2\pi i r} w^\ell,\ell)\big).$$
This gives rise to a bijection
$$\mathbb{Z} \times \mathbb{Z} \times S^1=
(\mathbb{Z} \times S^1) \times_{S^1} (\mathbb{Z} \times S^1) =
\mathrm{crit}\mathcal{A}^{\mu_E} \times_{S^1} 
\mathrm{crit}\mathcal{A}^{\mu_\kappa}=\mathrm{crit}\mathfrak{A}^E_\kappa.$$
In view of
\begin{eqnarray*}
\rho_*\big(e^{2\pi is} v^\nu\big)(t)&=&\sqrt{\frac{E}{\pi}}e^{2\pi is} e^{
2\pi i(t+\rho)\nu}\\
&=&\sqrt{\frac{E}{\pi}}e^{2\pi i(s+\rho \nu)}e^{2\pi it \nu}\\
&=&e^{2\pi i(s+\rho \nu)} v^\nu(t),
\end{eqnarray*}
the circle action on $\mathbb{Z} \times S^1 \times \mathbb{Z} \times S^1$
is given by
$$S^1 \times \mathbb{Z} \times S^1 \times \mathbb{Z} \times S^1
\to \mathbb{Z} \times S^1 \times \mathbb{Z} \times S^1,
\qquad (\rho,\nu,s,\ell,r) \mapsto (\nu, s+\rho \nu, \ell, r+\rho).$$
The Tate Rabinowitz action functional $\mathfrak{A}^E_\kappa$ is Morse-Bott
and its critical manifold consists of a $\mathbb{Z} \times \mathbb{Z}$-family
of circles. We choose on each circle an auxiliary Morse function having one
maximum and one minimum. For $(\nu,\ell) \in \mathbb{Z} \times \mathbb{Z}$
we denote by $u^+_{\nu,\ell}$ the point of maximum and by
$u^-_{\nu,\ell}$ the point of minimum. 
\\ \\
Let $\partial$ be the boundary operator algebraically counting
cascades consisting of negative gradient flow lines of
$\mathfrak{A}^E_\kappa$ and negative gradient flow lines of the
auxiliary Morse function on the critical circles between the generators
$u^\pm_{\nu,\ell}$, see~\cite[Appendix A]{cieliebak-frauenfelder1}.
The two Rabinowitz action functionals give rise to a double filtration
$$\mathcal{A}^{\mu_E}(u^+_{\nu,\ell})=\mathcal{A}^{\mu_E}(u^-_{\nu,\ell})=E\nu,
\qquad \mathcal{A}^{\mu_\kappa}(u^+_{\nu,\ell})=\mathcal{A}^{\mu_\kappa}(u^-_{\nu,\ell})
=\kappa \ell,$$
both of which are nonincreasing along $\partial$. We further define a $\mathbb{Z}$-grading by
$$|u^-_{\nu,\ell}|=2(\nu+\ell), \qquad |u^+_{\nu,\ell}|=2(\nu+\ell)+1,$$
so that $\partial$ decreases the grading by one.
\\ \\
The boundary operator $\partial$
has been computed in~\cite[\S3.2]{albers-cieliebak-frauenfelder} to be
\begin{equation}\label{parti}
\partial u^-_{\nu,\ell}=u^+_{\nu-1,\ell}+\nu u^+_{\nu,\ell-1}, \qquad
\partial u^+_{\nu,\ell}=0, \qquad (\nu,\ell) \in \mathbb{Z} \times \mathbb{Z}.
\end{equation}
This can be understood as follows. For degree reasons, the only
generator appearing in $\partial u^+_{\nu,\ell}$ is $u^-_{\nu,\ell}$;  
the coefficient $\langle\partial u^+_{\nu,\ell},u^-_{\nu,\ell}\rangle$
corresponds to the boundary operator for the circle and therefore
vanishes.  
The only generators appearing in $\partial u^-_{\nu,\ell}$ are
$u^+_{\nu-1,\ell}$ and $u^+_{\nu,\ell-1}$. 
Since the $S^1$-action is free on the second component, 
the coefficient $\langle\partial u^-_{\nu,\ell},u^+_{\nu-1,\ell}\rangle$
corresponds to the boundary operator for the non-equivariant 
Rabinowitz Floer chain complex of $\mathcal{A}^{\mu_E}$. 
As the circle in the complex plane is displaceable, 
the non-equivariant Rabinowitz Floer homology of
$\mathcal{A}^{\mu_E}$ vanishes \cite{cieliebak-frauenfelder1},
so we must have $\langle\partial u^-_{\nu,\ell},u^+_{\nu-1,\ell}\rangle=1$.
The coefficient $\langle\partial u^-_{\nu,\ell},u^+_{\nu,\ell-1}\rangle$
corresponds to the boundary operator for the local $S^1$-equivariant 
Rabinowitz Floer chain complex of the orbit $v^\nu$. 
Since the the orbit $v^\nu$ rotates $\nu$ times under the circle
action on the domain, the local differential equals the
one on the doubly-infinite lens space $S^\infty_\infty/\Z_\nu$ given by
$\langle\partial u^-_{\nu,\ell},u^+_{\nu,\ell-1}\rangle=\nu$.
\\ \\ 
For $\nu \in \mathbb{Z}$ we denote by 
$$C_{2\nu}=C_{2\nu}(\mathfrak{A}^E_\kappa)$$ 
the abelian group consisting of doubly infinite formal sums
of generators of degree $2\nu$, namely
$$\xi=\sum_{\ell \in \mathbb{Z}} \xi_\ell u^-_{\nu-\ell,\ell}$$
with coefficients $\xi_\ell \in \mathbb{Z}$.
Similarly, the abelian group 
$$C_{2\nu+1}=C_{2\nu+1}(\mathfrak{A}^E_\kappa)$$ 
consists of doubly infinite formal sums
$$\xi=\sum_{\ell \in \mathbb{Z}} \xi_\ell u^+_{\nu-\ell,\ell}.$$
We linearly extend $\partial$ to a map
$$\partial \colon C_k \to C_{k-1}, \qquad k \in \mathbb{Z}.$$
This is a boundary operator, i.e.,
$$\partial^2=0,$$
so that we can define the \emph{Tate Rabinowitz Floer homology} of the
harmonic oscillator as the quotient
$$H_*:=H_*(\mathfrak{A}^E_\kappa):= \ker\partial/\mathrm{im}\,\partial.$$
We further have linear isomorphisms
\begin{equation}\label{umap}
\mathfrak{U} \colon C_k \stackrel{\cong}\longrightarrow C_{k+2}
\end{equation}
that on generators are given by
$$\mathfrak{U}(u^+_{\nu,\ell})=u^+_{\nu,\ell+1}, \qquad
\mathfrak{U}(u^-_{\nu,\ell})=u^-_{\nu,\ell+1}.$$
These maps commute with the boundary operator,
$$\mathfrak{U} \circ \partial=\partial \circ \mathfrak{U},$$
so they induce isomorphisms on homology
$$\mathfrak{u} \colon H_k \stackrel{\cong}\longrightarrow H_{k+2}, \qquad k \in \mathbb{Z}.$$
This endows the homology $H_*$ with the structure of a 
$\mathbb{Z}[\mathbb{Z}]$-module, where the formal variable $\mathfrak{u}$ has
degree two. 
\\ \\
Our next goal is to show that the Tate Rabinowitz Floer homology of
the harmonic oscillator is nontrivial. However, our first lemma looks
like bad news. We say that 
$\xi \in C_k$ has \emph{finite positive support} if there exists $L \in 
\mathbb{Z}$ such that the coefficients $\xi_\ell$ vanish for $\ell>L$.
Note that all elements of odd degree are cycles. 
\begin{lemma}\label{nonhomo}
If $\eta \in C_{2\nu-1}$ has finite positive support, then $\eta \in \mathrm{im}\,\partial$.
\end{lemma}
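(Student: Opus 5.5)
The idea is to solve $\partial\xi=\eta$ directly for some $\xi\in C_{2\nu}$ by a downward recursion on the index $\ell$. The finite positive support of $\eta$ supplies the starting point of the recursion, and the fact that $C_{2\nu}$ consists of \emph{doubly infinite} formal sums means the resulting $\xi$ may have infinite negative support.

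First I compute $\partial$ in the coordinates of $C_{2\nu}$ and $C_{2\nu-1}$. Write
$$\xi=\sum_{\ell\in\Z}\xi_\ell u^-_{\nu-\ell,\ell}\in C_{2\nu},\qquad \eta=\sum_{\ell\in\Z}\eta_\ell u^+_{\nu-1-\ell,\ell}\in C_{2\nu-1}.$$
By (\ref{parti}) we have
$$\partial u^-_{\nu-\ell,\ell}=u^+_{\nu-1-\ell,\ell}+(\nu-\ell)\,u^+_{\nu-\ell,\ell-1}.$$
The second generator is $u^+_{\nu-1-(\ell-1),\ell-1}$, so it carries index $\ell-1$ in $C_{2\nu-1}$. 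Hence the coefficient of $u^+_{\nu-1-\ell,\ell}$ in $\partial\xi$ receives contributions only from $\xi_\ell$ and $\xi_{\ell+1}$, namely
$$(\partial\xi)_\ell=\xi_\ell+(\nu-1-\ell)\,\xi_{\ell+1}.$$
Each coefficient of $\partial\xi$ is therefore a finite sum. This also confirms that the linear extension of $\partial$ to doubly infinite sums is well defined, which I would note in passing.

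The equation $\partial\xi=\eta$ thus becomes the system
$$\xi_\ell+(\nu-1-\ell)\,\xi_{\ell+1}=\eta_\ell,\qquad \ell\in\Z.$$
Choose $L$ with $\eta_\ell=0$ for $\ell>L$. Set $\xi_\ell=0$ for all $\ell>L$; then every equation with $\ell>L$ holds trivially. For $\ell\le L$, define recursively in decreasing order
$$\xi_\ell:=\eta_\ell-(\nu-1-\ell)\,\xi_{\ell+1}.$$
This is possible precisely because the coefficient of $\xi_\ell$ in the equation is $1$, coming from the displaceability argument that gives $\langle\partial u^-_{\nu,\ell},u^+_{\nu-1,\ell}\rangle=1$. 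As a consequence every $\xi_\ell$ is an integer, and no divisibility by $\nu-1-\ell$ is ever required. The resulting $\xi$ is a legitimate element of $C_{2\nu}$ and satisfies $\partial\xi=\eta$, so $\eta\in\mathrm{im}\,\partial$.

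There is no real obstacle; the only point needing care is the index bookkeeping in the formula for $(\partial\xi)_\ell$. It is also worth remarking why the argument fails for general $\eta$. Without finite positive support there is no place to start the downward recursion. Solving upward instead would require dividing by $\nu-1-\ell$, which is impossible over $\Z$ in general and impossible outright at $\ell=\nu-1$, where that factor vanishes. This is presumably where nontrivial homology will come from later.
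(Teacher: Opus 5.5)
Your proposal is correct and is essentially the paper's own proof. You set $\xi_\ell=0$ for $\ell>L$, define $\xi_\ell=\eta_\ell-(\nu-1-\ell)\xi_{\ell+1}$ by downward recursion (exactly the recursion (\ref{rec}) in the paper), and check $\partial\xi=\eta$ coefficientwise using the same index bookkeeping.
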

\textbf{Proof. } We write
$$\eta=\sum_{\ell \in \mathbb{Z}} \eta_\ell u^+_{\nu-\ell-1,\ell}.$$
We can assume without loss of generality that $\eta \neq 0$. 
Hence, since $\eta$ has finite positive support, there exists $L \in \mathbb{Z}$ such that
$$L=\max\{\ell\mid \eta_\ell\neq 0\}.$$
We need to construct 
$$\xi=\sum_{\ell \in \mathbb{Z}} \xi_\ell u^-_{\nu-\ell,\ell} \in C_{2\nu}$$
such that
$$\partial \xi=\eta.$$
We set
$$\xi_\ell=0, \qquad \ell > L.$$
For $\ell \leq L$ we define downward recursively
\begin{equation}\label{rec}
\xi_\ell=\eta_\ell-(\nu-\ell-1)\xi_{\ell+1}.
\end{equation}
Since both $\eta_\ell$ and $\xi_\ell$ vanish for $\ell>L$, the formula
(\ref{rec}) actually holds true for all $\ell \in \mathbb{Z}$.
Using (\ref{parti}) and (\ref{rec}) we compute
\begin{eqnarray*}
\partial \xi&=&\sum_{\ell \in \mathbb{Z}}\xi_\ell
\partial u^-_{\nu-\ell,\ell}\\
&=&\sum_{\ell \in \mathbb{Z}}\xi_\ell\big(u^+_{\nu-\ell-1,\ell}+(\nu-\ell)u^+_{\nu-\ell,\ell-1}\big)\\
&=&\sum_{\ell \in \mathbb{Z}}\big(\xi_\ell+\xi_{\ell+1}(\nu-\ell-1)\big)u^+_{\nu-\ell-1,\ell}\\
&=&\sum_{\ell \in \mathbb{Z}}\eta_\ell u^+_{\nu-\ell-1,\ell}\\
&=&\eta.
\end{eqnarray*}
This proves the lemma. \hfill $\square$
\\ \\
We say that $\xi \in C_k$ has \emph{infinite positive support} if it does not have
finite positive support. We further say that $\xi$ is \emph{positively bounded}
if there exists $N \in \mathbb{N}$ such that 
$$|\xi_\ell| \leq N, \qquad \ell \in \mathbb{N}.$$
Note that we impose this condition only on the coefficients of
positive index $\ell$, so the coefficients
can be unbounded as $\ell$ tends to minus infinity. The next lemma guarantees that
our Tate Rabinowitz Floer homology is nontrivial.
\begin{lemma}\label{homo}
Assume that $\eta \in C_{2\nu-1}$ has infinite positive support but is positively bounded.
Then $\eta \notin \mathrm{im}\,\partial$. 
\end{lemma}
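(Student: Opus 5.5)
Suppose for contradiction that $\eta=\partial\xi$ for some $\xi=\sum_\ell \xi_\ell u^-_{\nu-\ell,\ell}\in C_{2\nu}$. The computation in the proof of Lemma~\ref{nonhomo} applies verbatim to arbitrary doubly infinite sums. Comparing coefficients of $u^+_{\nu-\ell-1,\ell}$ therefore gives the recursion
$$\eta_\ell=\xi_\ell+(\nu-\ell-1)\xi_{\ell+1},\qquad \ell\in\mathbb{Z}.$$
I will read this upward instead of downward: $\xi_{\ell+1}=(\eta_\ell-\xi_\ell)/(\nu-\ell-1)$ whenever $\ell\neq \nu-1$. The strategy is to show that integrality of the $\xi_\ell$, together with the boundedness of $\eta_\ell$ for large $\ell$, forces $\xi_\ell$ to vanish eventually. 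Then $\eta_\ell=0$ eventually, which contradicts infinite positive support.

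\textbf{Key step.} Set $m_\ell:=\ell+1-\nu$, so the recursion reads $\eta_\ell=\xi_\ell-m_\ell\xi_{\ell+1}$ with $m_\ell\to+\infty$. Let $N$ be a bound for $|\eta_\ell|$ for $\ell\ge 0$, and choose $\ell_0\ge 0$ with $m_\ell\ge 2$ for $\ell\ge\ell_0$. I claim $|\xi_\ell|\le N$ for all $\ell\ge \ell_0$, and I expect this to be the main obstacle.

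I will prove the claim by contradiction. If $|\xi_{\ell}|>N$ for some $\ell\ge\ell_0$, then $|m_\ell\xi_{\ell+1}|=|\xi_\ell-\eta_\ell|\ge|\xi_\ell|-N>0$, so $\xi_{\ell+1}\neq 0$. More quantitatively,
$$|\xi_\ell|\ge m_\ell|\xi_{\ell+1}|-|\eta_\ell|\ge 2|\xi_{\ell+1}|-N .$$
This lower bound alone does not propagate upward, so I will instead bound the growth going upward using integrality: $|\xi_{\ell+1}|\le(|\xi_\ell|+N)/m_\ell$. Hence once $|\xi_\ell|$ is large, the subsequent values shrink roughly geometrically, and eventually $|\xi_{\ell'}|\le N$ for some $\ell'\ge\ell$. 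So I will simplify the argument: there are infinitely many $\ell\ge\ell_0$ with $|\xi_\ell|\le N$, and I want to show that from such an $\ell$ with $m_\ell>2N$, every later $\xi$ vanishes.

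\textbf{Finishing.} Suppose $|\xi_\ell|\le N$ and $m_\ell>2N$. Then $|\xi_{\ell+1}|\le 2N/m_\ell<1$, so the integer $\xi_{\ell+1}$ is $0$. Applying the same bound at $\ell+1$ gives $|\xi_{\ell+2}|\le N/m_{\ell+1}<1$, so $\xi_{\ell+2}=0$ as well. Inductively, $\xi_k=0$ for all $k>\ell$.

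To guarantee such an $\ell$ exists, I use the shrinking estimate: starting from any $\ell_1$ with $m_{\ell_1}>2N+1$, the map $x\mapsto (x+N)/m$ with $m>2N+1$ decreases every $x>N$ strictly, and the iterates converge toward $N/(m-1)<1$. Since the $\xi$ are integers, after finitely many steps we reach some $\ell$ with $|\xi_\ell|\le N$, in fact with $|\xi_\ell|\le 1\le N$ (assuming $N\ge1$).

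Consequently $\xi_k=0$ for all large $k$, and then $\eta_k=\xi_k-m_k\xi_{k+1}=0$ for all large $k$. So $\eta$ has finite positive support, a contradiction.
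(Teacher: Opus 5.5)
Your proof is correct and is essentially the paper's argument. The recursion gives $|\xi_{\ell+1}|\le(|\xi_\ell|+N)/(\ell+1-\nu)$, and integrality then forces $\xi_\ell$, hence $\eta_\ell$, to vanish for large $\ell$; you only replace the paper's explicit decay bound $|\xi_{\nu+k}|\le|\xi_\nu|/k!+2N/k$ with the observation that, once $m_\ell>2N+1$, the integers $|\xi_\ell|$ strictly decrease while they exceed $N$, after which the next one is $0$ and all later ones stay $0$. Your intermediate claim that $|\xi_\ell|\le N$ for all $\ell\ge\ell_0$ is never proved, and it fails for general integer solutions of the recursion; your final argument does not use it, so simply remove it.
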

\textbf{Proof. }
Arguing by contradiction, suppose that
$$\eta=\sum_{\ell \in \mathbb{Z}} \eta_\ell u^+_{\nu-\ell-1,\ell}$$
admits a primitive, i.e., there exists 
$$\xi=\sum_{\ell \in \mathbb{Z}} \xi_\ell u^-_{\nu-\ell,\ell} \in C_{2\nu}$$
such that
$$\partial \xi=\eta.$$
The coefficients of $\xi$ have to satisfy the recursion relation (\ref{rec}). 
Since $\eta$ is positively bounded, there exists $N \in \mathbb{N}$ such that
$$|\eta_\ell| \leq N, \qquad \ell \geq \nu.$$
Hence we obtain from~\eqref{rec} the estimate
\begin{equation}\label{recin}
|\xi_{\ell+1}| \leq \frac{|\xi_\ell|+N}{\ell+1-\nu}, \qquad \ell \geq \nu.
\end{equation}
We claim that this estimate implies $\lim_{\ell\to\infty}\xi_\ell=0$. 
To see this, define $x_k:=|\xi_{k+\nu}|$ for $k\geq 0$. Then $x_{k+1}\leq\frac{x_k+N}{k+1}$, which by induction implies
$$
  x_k\leq\frac{x_0}{k!}+Nc_k,\qquad c_k:=\frac{1}{k}+\frac{1}{k(k-1)}+\cdots+\frac{1}{k!}.
$$
Since $c_k$ is a sum of $k$ terms each $\leq 1/k$, we have $c_k\leq 1$ for all $k$. This implies $c_k=\frac{1+c_{k-1}}{k}\leq 2/k$, and therefore the claim in view of 
$$
  x_k \leq \frac{x_0}{k!} + \frac{2 N}{k}\xrightarrow[k\to\infty]{} 0.
$$
Since the $\xi_\ell$ are integers, the claim implies that there exists an $L$ such $\xi_\ell=0$ for all $\ell\geq L$. By~\eqref{rec} this yields $\eta_\ell=0$ for all $\ell\geq L$, contradicting the hypothesis that $\eta$ has infinite positive support. 
\hfill $\square$
\\ \\
While the Tate Rabinowitz homology for the harmonic oscillator is extremely rich
in odd degrees, we show next that it vanishes in even degrees.

\begin{lemma}\label{noed}
For every $\nu \in \mathbb{Z}$ we have $\ker(\partial:C_{2\nu}\to
C_{2\nu-1})=\{0\}$, hence $H_{2\nu}=\{0\}$. 
\end{lemma}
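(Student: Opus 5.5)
I would compute $\partial\xi$ explicitly for a general even-degree chain and read off the conditions $\partial\xi=0$ imposes on the coefficients. Write
$$\xi=\sum_{\ell\in\mathbb{Z}}\xi_\ell u^-_{\nu-\ell,\ell}\in C_{2\nu}.$$
The computation in the proof of Lemma~\ref{nonhomo} gives
$$\partial\xi=\sum_{\ell\in\mathbb{Z}}\big(\xi_\ell+(\nu-\ell-1)\xi_{\ell+1}\big)u^+_{\nu-\ell-1,\ell}.$$
That computation uses only (\ref{parti}) and the reindexing $\ell\mapsto\ell+1$ in the second sum. This is legitimate for doubly infinite sums, because each generator $u^+_{\nu-\ell-1,\ell}$ receives contributions from only the two terms $\ell$ and $\ell+1$. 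Hence $\partial\xi=0$ if and only if
$$\xi_\ell=-(\nu-\ell-1)\xi_{\ell+1}\qquad\text{for all }\ell\in\mathbb{Z}.$$

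The key step is to evaluate this relation at the special index $\ell=\nu-1$, where the coefficient $\nu-\ell-1$ vanishes. This gives $\xi_{\nu-1}=0$. The relation then determines every coefficient with smaller index from the one above it, so by downward induction $\xi_\ell=0$ for all $\ell\le\nu-1$.

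For $\ell\ge\nu$ I would run the recursion upward. Setting $\ell=\nu-1+k$ with $k\ge1$, the relation reads $\xi_{\nu-1+k}=k\,\xi_{\nu+k}$, so
$$\xi_{\nu}=k!\,\xi_{\nu+k}\qquad\text{for every }k\ge0.$$
Thus $\xi_\nu$ is an integer divisible by $k!$ for every $k$. Taking $k$ with $k!>|\xi_\nu|$ forces $\xi_\nu=0$. Applying the same argument with $\nu$ replaced by $\nu+j$ (that is, $\xi_{\nu+j}=\frac{(k+j)!}{j!}\,\xi_{\nu+j+k}$) gives $\xi_{\nu+j}=0$ for all $j\ge0$. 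Together with the previous paragraph, $\xi=0$.

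The only step needing any thought is this upward direction. Without integrality, a chain like $\xi_{\nu+k}=c/k!$ would be a nonzero cycle, so it is precisely the divisibility by all factorials of an integer coefficient that kills the kernel. This parallels the integrality argument at the end of Lemma~\ref{homo}. Since $\ker\partial|_{C_{2\nu}}=\{0\}$, it follows at once that $H_{2\nu}=\{0\}$.
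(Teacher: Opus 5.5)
Your proposal is correct and follows the paper's proof: the same recursion $\xi_\ell=(\ell+1-\nu)\xi_{\ell+1}$, vanishing for $\ell\le\nu-1$ from the zero coefficient at $\ell=\nu-1$, and vanishing for $\ell\ge\nu$ from $\xi_\nu=k!\,\xi_{\nu+k}$ together with integrality. The only difference is cosmetic. You phrase the last step as divisibility by all factorials, whereas the paper argues that $\xi_{\nu+k}=\xi_\nu/k!\to 0$ forces eventual vanishing and then inducts downward.
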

\textbf{Proof. } Suppose that 
$$\xi=\sum_{\ell \in \mathbb{Z}}\xi_\ell u^-_{\nu-\ell,\ell}
\in C_{2\nu}$$
satisfies
$$\partial \xi=0.$$
We will show that this implies $\xi=0$. Since $\xi$ lies in the
kernel of the boundary operator, we obtain as a special instance of
(\ref{rec}) the following recursion relation between
its coefficients: 
\begin{equation}\label{recsimp}
\xi_\ell=(\ell+1-\nu)\xi_{\ell+1}, \qquad \ell \in \mathbb{Z}.
\end{equation}
In particular, plugging $\ell=\nu-1$ into this formula we obtain
$$\xi_{\nu-1}=0,$$
and therefore by downward induction using (\ref{recsimp})
\begin{equation}\label{van1}
\xi_\ell=0, \qquad \ell \leq \nu-1.
\end{equation}
It remains to show that the coefficients vanish also for $\ell\geq \nu$.
In order to prove this, we first show the following claim.
\\ \\
\textbf{Claim. }For every $\ell_0 \in \mathbb{Z}$ there exists $\ell \geq \ell_0$
such that $\xi_\ell=0$.
\\ \\
Arguing by contradiction, suppose there exists $\ell_0$ such that 
$$\xi_\ell \neq 0, \qquad \ell \geq \ell_0.$$
Using (\ref{recsimp}) inductively, we obtain
$$\xi_{\nu+k}=\frac{\xi_\nu}{k!} \xrightarrow[k\to\infty]{} 0.$$
Since $\xi_{\nu+k}\in\Z\setminus\{0\}$ for all $k\geq 0$, 
this is a contradiction proving the claim. 
\\ \\
If $\xi_L=0$ for some $L\geq \nu$, then downward induction using (\ref{recsimp}) shows that
$\xi_\ell=0$ for every $\nu \leq \ell \leq L$. Since by the claim we find
$L$ arbitrarily large such that $\xi_L=0$, we deduce that
$$\xi_\ell=0, \qquad \ell \geq \nu.$$
Combining this with (\ref{van1}), we see that all coefficients of $\xi$
vanish, so that $\xi=0$. This proves the lemma. \hfill $\square$
\\ \\
It follows from Lemma~\ref{homo} that $H_{2\nu+1} \neq \{0\}$ for every $\nu \in \mathbb{Z}$.
We next associate to a nonzero homology class $\alpha \in H_{2\nu+1}$ a spectral number.
For this, we introduce on generators the {\em shifted action}
$$
  \wt{\mathfrak{A}}^E_\kappa(u^+_{\nu,\ell}) :=
  \mathfrak{A}^E_\kappa(u^+_{\nu,\ell}) + \frac{\nu\hbar}{2}.
$$
Since the Conley-Zehnder index of the first component $v^\nu$ of 
$u_{\nu,\ell}=(v^\nu,w^\ell)$ is $2\nu$, this corresponds
to the addition of $\hbar/4$ times the Conley-Zehnder index.
This is reminiscent of the rule in semiclassics of adding $\hbar/4$
times the Maslov index \cite{gutzwiller}.
For a mathematical discussion of the Maslov index
in semiclassics we refer to the paper by Sun \cite{sun}.
\\ \\
Using this, we first define for a nonvanishing chain
$$\xi=\sum_{\ell \in \mathbb{Z}}\xi_\ell u^+_{\nu-\ell,\ell} \in 
C_{2\nu+1}$$
the quantity
\begin{equation}\label{specnumb}
\sigma(\xi):=\sup\big\{\wt{\mathfrak{A}}^E_\kappa(u^+_{\nu-\ell,\ell}) \mid \xi_\ell \neq 0\big\}
\in (-\infty,\infty],
\end{equation} 
and then we define for a homology class $0\neq\alpha \in H_{2\nu+1}$
its {\em spectral number} as the infinimum over all its
representatives, 
$$\sigma(\alpha):=\inf\big\{\sigma(\xi)\mid[\xi]=\alpha\big\} \in [-\infty,\infty].$$

\begin{prop}\label{spec}
The spectral number of each $0\neq\alpha \in H_{2\nu+1}$ satisfies
$$\sigma(\alpha)=\left\{\begin{array}{cc}
\vspace{3pt}
\infty & E<\kappa-\frac{\hbar}{2},\\
\vspace{3pt}
(E+\frac{\hbar}{2})\nu & E=\kappa-\frac{\hbar}{2},\\
-\infty & E>\kappa-\frac{\hbar}{2}.
\end{array}\right.$$ 
\end{prop}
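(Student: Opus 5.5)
The plan is to reduce everything to an explicit formula for the shifted action of the generators of $C_{2\nu+1}$, and then to use Lemma~\ref{nonhomo} to control which chains can represent $\alpha$.

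\textbf{Step 1: the shifted action is affine in $\ell$.} A generator of degree $2\nu+1$ has the form $u^+_{\nu-\ell,\ell}$. By the double filtration its action is $E(\nu-\ell)+\kappa\ell$. Adding the correction $(\nu-\ell)\hbar/2$ gives
$$\wt{\mathfrak{A}}^E_\kappa(u^+_{\nu-\ell,\ell})=\Big(E+\frac{\hbar}{2}\Big)\nu+c\,\ell,\qquad c:=\kappa-E-\frac{\hbar}{2}.$$
Hence $\sigma(\xi)=(E+\frac{\hbar}{2})\nu+\sup\{c\ell\mid \xi_\ell\neq 0\}$. The three cases of the proposition are exactly $c>0$, $c=0$ and $c<0$.

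\textbf{Step 2: support of representatives.} Let $\xi$ be any representative of $\alpha\neq 0$. Then $\xi$ must have infinite positive support. Indeed, Lemma~\ref{nonhomo}, applied with $\nu+1$ in place of $\nu$ so that it concerns $C_{2\nu+1}$, shows that a cycle with finite positive support is a boundary, which would force $\alpha=0$.

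\textbf{Step 3: the three cases.}
\begin{itemize}
\item If $c>0$, infinite positive support gives $\sup\{c\ell\mid\xi_\ell\neq0\}=\infty$ for every representative, so $\sigma(\alpha)=\infty$.
\item If $c=0$, every generator has shifted action $(E+\frac{\hbar}{2})\nu$. Every representative is nonzero because $\alpha\neq 0$, so $\sigma(\xi)=(E+\frac{\hbar}{2})\nu$ for all representatives, and this is also the infimum.
\item If $c<0$, I will produce representatives supported in $\{\ell\geq L\}$ for arbitrary $L$. Given a representative $\xi$, write $\xi=\xi_{<L}+\xi_{\geq L}$ by truncating the coefficients. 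Since $\xi_{<L}$ has coefficients vanishing for $\ell\geq L$, it has finite positive support. It is a cycle, because all odd-degree chains are cycles, so Lemma~\ref{nonhomo} shows $\xi_{<L}\in\mathrm{im}\,\partial$. Hence $[\xi_{\geq L}]=\alpha$, and therefore $\sigma(\alpha)\leq\sigma(\xi_{\geq L})\leq(E+\frac{\hbar}{2})\nu+cL$. Letting $L\to\infty$ gives $\sigma(\alpha)=-\infty$.
\end{itemize}

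I expect no serious obstacle. The only points needing care are the index bookkeeping in Step 1, namely that the correction is $\frac{\hbar}{2}$ times the first index $\nu-\ell$ and not $\nu$, and the observation in Step 2 that Lemma~\ref{nonhomo} rules out representatives with finite positive support.
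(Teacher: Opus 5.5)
Your proof is correct and follows the paper's argument essentially step for step. You use the same affine formula $(E+\frac{\hbar}{2})\nu+(\kappa-E-\frac{\hbar}{2})\ell$ for the shifted action, Lemma~\ref{nonhomo} to force infinite positive support of every representative when $E<\kappa-\frac{\hbar}{2}$, and the same truncation $\xi\mapsto\xi_{\geq L}$ when $E>\kappa-\frac{\hbar}{2}$. The one point you leave implicit, that $\xi_{\geq L}\neq 0$ so that $\sigma(\xi_{\geq L})$ is defined, follows at once from $[\xi_{\geq L}]=\alpha\neq 0$.
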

\textbf{Proof. }
Note that 
\begin{equation}\label{act}
\begin{aligned}
  \wt{\mathfrak{A}}^E_\kappa(u^+_{\nu-\ell,\ell})
  &= \mathfrak{A}^E_\kappa(u^+_{\nu-\ell,\ell}) + \tfrac{(\nu-\ell)\hbar}{2} 
  = E(\nu-\ell)+\kappa\ell+\tfrac{(\nu-\ell)\hbar}{2} \cr
  &= (E+\tfrac{\hbar}{2})\nu+\big(\kappa-E-\tfrac{\hbar}{2}\big)\ell.
\end{aligned}
\end{equation} 
We now discuss the three cases one by one.
\\ \\
\textbf{Case $E<\kappa-\tfrac{\hbar}{2}$: }
Consider $\xi \in C_{2\nu+1}$ with $\alpha=[\xi]$. Since $\alpha\neq 0$,
by Lemma~\ref{nonhomo} we deduce that $\xi$ has infinite positive support.
Since $\kappa>E+\tfrac{\hbar}{2}$, we infer from (\ref{act}) that
$$\sigma(\xi)=\infty.$$
Since $\xi$ was an arbitrary representative of $\alpha$, this implies that
$$\sigma(\alpha)=\infty.$$
\textbf{Case $E=\kappa-\tfrac{\hbar}{2}$: }
In this case we have
$\wt{\mathfrak{A}}^E_\kappa(u^+_{\nu-\ell,\ell})=(E+\frac{\hbar}{2})\nu$
independently of $\ell$, which immediately implies that 
$$\sigma(\alpha)=(E+\tfrac{\hbar}{2})\nu.$$
\textbf{Case $E>\kappa-\tfrac{\hbar}{2}$: }
Consider
$$\xi=\sum_{\ell \in \mathbb{Z}}\xi_\ell u^+_{\nu-\ell,\ell} \in C_{2\nu+1}$$ 
with $[\xi]=\alpha$. 
For $L \in \mathbb{N}$ abbreviate
$$\xi^L:=\sum_{\ell \geq L}\xi_\ell u^+_{\nu-\ell,\ell}.$$
Then $\xi-\xi^L$ has finite positive support, so by Lemma~\ref{nonhomo}
we have
$\xi-\xi^L \in \mathrm{im}\,\partial$, 
and therefore
$$[\xi^L]=[\xi]=\alpha.$$
Since $\alpha \neq 0$, the quantity
$$\ell^L:=\min\{\ell \geq L\mid \xi_\ell \neq 0\}$$
is well-defined. Since $\kappa<E+\tfrac{\hbar}{2}$, the function
$$\mathbb{Z} \to \mathbb{R}, \qquad \ell \mapsto 
  (E+\tfrac{\hbar}{2})\nu+\big(\kappa-E-\tfrac{\hbar}{2}\big)\ell$$ 
is strictly decreasing, so that by (\ref{act}) we have
$$\sigma(\xi^L)=
  (E+\tfrac{\hbar}{2})\nu+\big(\kappa-E-\tfrac{\hbar}{2}\big)\ell^L \leq 
  (E+\tfrac{\hbar}{2})\nu+\big(\kappa-E-\tfrac{\hbar}{2}\big)L.$$ 
Again using $\kappa<E+\tfrac{\hbar}{2}$, we deduce that
$$\lim_{L \to \infty} \sigma(\xi^L)=-\infty.$$ 
Since each $\xi^L$ represents the homology class $\alpha$, this implies that
$$\sigma(\alpha)=-\infty.$$
This proves the proposition. \hfill $\square$
\\ \\
As explained above, the $\mathfrak{U}$-map (\ref{umap}) endowes 
Tate Rabinowitz Floer homology with the structure of a $\mathbb{Z}[\mathbb{Z}]$-module. 
The $\mathfrak{U}$-map does not preserve the action, but in view of
$$\wt{\mathfrak{A}}^E_\kappa\big(\mathfrak{U}(u^\pm_{\nu,\ell})\big)=
  \wt{\mathfrak{A}}^E_\kappa(u^\pm_{\nu,\ell+1})=
  \wt{\mathfrak{A}}^E_\kappa(u^\pm_{\nu,\ell})+
\kappa$$ 
it shifts it uniformly by the amount of $\kappa$, so that for $0\neq\alpha \in H_*$
we have
\begin{equation}\label{shift}
\sigma(\mathfrak{u}\alpha)=\sigma(\alpha)+\kappa.
\end{equation}
We are now in position to prove Theorem\,A from the Introduction.
\\ \\
\textbf{Proof of Theorem\,A. }
By the shift property~\eqref{shift}, we can associate to a
$\Z[\Z]$-equivalence class (under the action of $\mathfrak{u}$) a
spectral number in $\R/\hbar\Z$ only if $\kappa\in\hbar\Z$, i.e.~$\kappa\in\hbar\N$
because $\kappa>0$. 
By Lemma~\ref{noed} there are no spectral numbers for $\alpha\in
H_{2\nu}=\{0\}$. By Proposition~\ref{spec}, the spectral number
$\sigma(\alpha)$ of $0\neq\alpha\in H_{2\nu+1}$ is finite if and
only if $E=\kappa-\tfrac{\hbar}{2}\in(\N-\tfrac12)\hbar$. 
\hfill $\square$

\begin{rem}
Lemmas~\ref{homo} and~\ref{noed} rely crucially on the fact that the
coefficients are $\Z$, and they are not true with rational coefficients. 
A similar phenomenon occurs in~\cite{albers-cieliebak-frauenfelder}. 
One may wonder whether this has some number theoretic significance,
cf.~\cite{cieliebak-frauenfelder3} for a relation between spectra and
number theory. 
\end{rem}

\section{The iterated Tate Rabinowitz action functional}

Consider exact symplectic manifolds $(M_i,\omega_i=d \lambda_i)$, 
$1 \leq i \leq m$,
and smooth functions
$$H_i \colon M_i \to \mathbb{R},\qquad 1 \leq i \leq m.$$
The product
\begin{equation}\label{decom}
(M,\omega)=\bigg(\prod_{i=1}^m M_i, \oplus_{i=1}^m \omega_i\bigg)
\end{equation}
is an exact symplectic manifold with primitive
$$\lambda=\oplus_{i=1}^m \lambda_i.$$
We define the smooth function
$$H \colon M \to \mathbb{R}, \qquad (x_1, \ldots,x_m) \mapsto
\sum_{i=1}^m H_i(x_i).$$
The $m$-dimensional torus
$$T^m=\underbrace{S^1\times \ldots \times S^1}_{m\,\,\textrm{times}}$$
acts on the free loop space $\mathcal{L}_M$ as follows. 
For $\rho=(\rho_1,\ldots,\rho_m) \in T^m$
and $v=(v_1,\ldots,v_m)\in \mathcal{L}_M$ define
$$\rho_* v=\big((\rho_1)_*v_1,\ldots,(\rho_m)_*v_m\big),$$
i.e., the action
$$T^m \times \mathcal{L}_M \to \mathcal{L}_M$$
reparametrizes all components of $v$ individually. 
We define the \emph{iterated Tate space} 
$$\mathfrak{T}^m=\mathfrak{T}^m_M=(\mathcal{L}_M \times \mathbb{R})
\times_{T^m} (\mathcal{L}_\mathbb{C}\times \mathbb{R})^m,$$
where $T^m$ acts on $(\mathcal{L}_\mathbb{C}\times \mathbb{R})^m$ componentwise
on the target of the loops as in (\ref{taract}). Let
$\kappa=(\kappa_1,\ldots,\kappa_m) \in (0,\infty)^m$ be a vector with positive
entries. The \emph{$m$-fold iterated Rabinowitz action functional} 
$$\mathfrak{A}^H_\kappa \colon \mathfrak{T}^m \to \mathbb{R},$$
is defined for $(v,\tau) \in \mathcal{L}_M \times \mathbb{R}$ and
$(w,\eta)=(w_1,\eta_1,\ldots,w_m,\eta_m) \in (\mathcal{L}_\mathbb{C}\times \mathbb{R})^m$
as
$$\mathfrak{A}^H_\kappa([v,\tau,w,\eta])=\mathcal{A}^H(v,\tau)
+\sum_{i=1}^m \mathcal{A}^{\mu_{\kappa_i}}(w_i,\eta_i).$$

\section{Several uncoupled harmonic oscillators}\label{sevharm}

We consider the case where $M_i=\mathbb{C}$ for $1 \leq  i \leq m$, with
the iterated Tate space $\mathfrak{T}^m=\mathfrak{T}^m_{\mathbb{C}^m}$. 
We fix a vector 
$$\varpi=\Big(\tfrac{1}{\varpi_1},\ldots,\tfrac{1}{\varpi_m}\Big) \in (0,\infty)^m$$
and energy $E>0$ and define the smooth function
$$\mu_{E,\varpi} \colon \mathbb{C}^m \to \mathbb{R}, \qquad
(z_1,\ldots,z_m) \mapsto \sum_{i=1}^m \pi \varpi_i |z_i|^2-E.$$
For an additional vector $\kappa=(\kappa_1,\ldots,\kappa_m) \in (0,\infty)^m$, we 
will study in this section the Floer complex for the $m$-fold iterated Rabinowitz
action functional
$$\mathfrak{A}^E_\kappa=\mathfrak{A}^{E,\varpi}_\kappa=\mathfrak{A}^{\mu_{E,\varpi}}_\kappa
\colon \mathfrak{T}^m \to \mathbb{R}.$$
In order to describe this complex, note that the sublevel set
$\{\mu_{E,\varpi}\leq 0\}$ of the above Hamiltonian is an
ellipsoid. We recall
from~\cite{floer-hofer-wysocki} its Floer complex. 
Denote by $(\widetilde\Omega_\nu)_{\nu\in\N}$ the sequence of positive integer multiples of 
$\tfrac{1}{\varpi_1}, \ldots \tfrac{1}{\varpi_m}$ arranged in
nondecreasing order with repetitions. They correspond to closed Reeb
orbits $\gamma_\nu$ of action $E\widetilde\Omega_\nu$ and Conley-Zehnder index $m-1+2\nu$
(understood in the Morse-Bott sense in case of rational dependencies
among the $\varpi_i$). The (non-equivariant) Floer complex has for
each $\gamma_\nu$ two generators $\gamma_\nu^\pm$, corresponding to the
maximum and minimum of a Morse function on the circle, such that
$\partial\gamma_\nu^+=0$ and $\partial\gamma_{\nu+1}^-=\gamma_\nu^+$. 
(In the case of rational dependencies this description also
holds, where the $\gamma_\nu$ are generators of the
homology of the critical manifolds).
\\ \\
For the Rabinowitz Floer complex of $\mu_{E,\varpi}$ we need to
consider all integer multiples rather than just the positive ones.
To get a more symmetric description, we will shift the index $\nu$ by
$\tfrac{m-1}{2}$. 
Thus, we denote by $(\Omega_\nu)_{\nu\in
  \mathbb{Z}+\frac{m-1}{2}}$ the sequence of integer multiples of  
$\tfrac{1}{\varpi_1}, \ldots \tfrac{1}{\varpi_m}$ arranged in
nondecreasing order with repetitions such that
$$\Omega_{-\frac{m-1}{2}}=\Omega_\frac{m-1}{2}=0.$$
Note that
\begin{equation}\label{ant1}
\Omega_\nu=-\Omega_{-\nu}, \qquad \nu \in \mathbb{Z}+\tfrac{m-1}{2}.
\end{equation}
They correspond to generators $\gamma_\nu^\pm$ of action
$$
  \mathfrak{A}^{\mu_{E,\varpi}}(\gamma_\nu^\pm) = E\Omega_\nu
$$
and degrees
$$
  |\gamma_\nu^-| = m-1+2(\nu-\tfrac{m-1}{2}) = 2\nu,\qquad
  |\gamma_\nu^+| = 2\nu + 1
$$
such that
\begin{equation}\label{diff-ellipsoid}
  \partial\gamma_\nu^+=0,\qquad
  \partial\gamma_\nu^-=\gamma_{\nu-1}^+,\qquad \nu\in\Z+\tfrac{m-1}{2}. 
\end{equation}
We denote the corresponding generators of the Tate Rabinowitz Floer complex by
$u^+_{\nu,k}$ and $u^-_{\nu,k}$, where $\nu \in \mathbb{Z}+\tfrac{m-1}{2}$ and 
$k \in \mathbb{Z}^m$. 
By the preceding discussion, they have action
\begin{equation}\label{wirk}
\mathfrak{A}^E_\kappa\big(u^\pm_{\nu,k}\big)=\langle \kappa,k\rangle+E\Omega_\nu
\end{equation}
and degrees
\begin{align*}
  |u^+_{\nu,k}| &= 2\big(\nu+\zeta_k\big)+1& &\hspace{-83pt}\in 2\mathbb{Z}+m\\
  |u^-_{\nu,k}| &= 2\big(\nu+\zeta_k\big)& &\hspace{-83pt}\in 2\mathbb{Z}+m-1,
\end{align*}
where for a vector $k \in \mathbb{Z}^m$ we denote by
$$\zeta_k=\sum_{i=1}^m k_i$$
the sum of its entries. 
To describe the differential, we introduce some notation. 
Let $\mathfrak{b}=\{e_1,\ldots,e_m\}$ be the standard basis of
$\mathbb{R}^m$. 
We choose maps
$$\varepsilon\colon \mathbb{Z}+\tfrac{m-1}{2} \to \mathfrak{b}, \qquad
\mu \colon \mathbb{Z}+\tfrac{m-1}{2} \to \mathbb{Z}$$
such that
\begin{equation}\label{ant2}
\Omega_\nu=\mu(\nu)\big\langle \varpi, \varepsilon
(\nu)\big\rangle, \qquad \nu \in \mathbb{Z}+\tfrac{m-1}{2} 
\end{equation}
and
\begin{equation}\label{ant3}
\varepsilon(\nu)=\varepsilon(-\nu), \qquad \mu(\nu)=-\mu(-\nu), \qquad
\nu \in \mathbb{Z}+\tfrac{m-1}{2}.
\end{equation}
If the ratios $\tfrac{\varpi_i}{\varpi_j}$ are irrational for all
$i \neq j$, then the maps $\varepsilon$ and
$\mu$ for $|\nu|>\tfrac{m-1}{2}$ are uniquely determined by
(\ref{ant2}) and in this case (\ref{ant3}) follows from (\ref{ant1}). 
In the presence of rational ratios the map $\mu$ is still unique,
corresponding to the multiplicity, but the map $\varepsilon$ is not
uniquely determined.
\\ \\
\begin{lemma}
The boundary operator is given on generators by
$$\partial u^-_{\nu,k}=u^+_{\nu-1,k}+\mu(\nu)u^+_{\nu,k-\varepsilon(\nu)},\qquad \partial u^+_{\nu,k}=0.$$
\end{lemma}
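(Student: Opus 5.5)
The plan is to follow the heuristic derivation of (\ref{parti}) in Section~\ref{harm}: first restrict the possible terms of the differential using grading and the double filtration, then identify each surviving coefficient with a known local or non-equivariant differential.

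\textbf{Constraining the terms.} Since $\partial$ lowers degree by one, $\partial u^+_{\nu,k}$ can only contain generators $u^-_{\nu',k'}$ with $\nu'+\zeta_{k'}=\nu+\zeta_k$. Likewise, $\partial u^-_{\nu,k}$ can only contain generators $u^+_{\nu',k'}$ with $\nu'+\zeta_{k'}=\nu+\zeta_k-1$.

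Next, each action $\mathcal{A}^{\mu_{\kappa_i}}$ is nonincreasing along the cascades. This holds because the $\mathcal{L}_\mathbb{C}$-factors are decoupled, exactly as for the double filtration in Section~\ref{harm}. Hence $k'\le k$ componentwise. The $\mathcal{A}^{\mu_{E,\varpi}}$-part is also nonincreasing, so $\Omega_{\nu'}\le\Omega_\nu$. Combined with the degree condition, this leaves two possibilities:
\begin{itemize}
\item $k'=k$ and $\nu'=\nu-1$ (or $\nu'=\nu$ for $\partial u^+$);
\item $k'=k-e_i$ and $\nu'=\nu$ (or $\nu'=\nu+1$ for $\partial u^+$).
\end{itemize}
I would argue that larger drops in $k$ are excluded by an energy/index argument. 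Each unit of $\zeta$ dropped must be compensated by a rise in $\nu$, which raises $\Omega$; so the total action must still decrease by a positive amount for a generic choice of $\kappa$. For $\partial u^+_{\nu,k}$, the only candidate with $k'=k$ is $u^-_{\nu,k}$. Its coefficient is the Morse differential on the critical circle, which vanishes, so $\partial u^+_{\nu,k}=0$.

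\textbf{Computing the coefficients.} For the term $u^+_{\nu-1,k}$ in $\partial u^-_{\nu,k}$, the $\mathcal{L}_\mathbb{C}^m$ components stay at the same critical points. Since $T^m$ acts freely there, this coefficient equals the non-equivariant Rabinowitz Floer differential of the ellipsoid. By (\ref{diff-ellipsoid}) it is $1$; this is forced by the vanishing of Rabinowitz Floer homology for the displaceable ellipsoid boundary.

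For the term $u^+_{\nu,k-e_i}$, the loop component stays on the orbit $\gamma_\nu$, and only the $i$-th Tate factor drops by one. The relevant torus action on $\gamma_\nu$ factors through the $i$-th circle. The orbit $\gamma_\nu$ lies in the $i$-th coordinate plane with $i=\varepsilon(\nu)$ and winds $\mu(\nu)$ times. Reparametrisation by the $j$-th circle with $j\neq i$ acts trivially on it.

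Accordingly, I would reduce the local equivariant complex to the one-circle case of Section~\ref{harm}. For the factor $i=\varepsilon(\nu)$ this gives the lens-space differential with coefficient $\mu(\nu)$. For $j\neq i$, the $j$-th circle acts trivially on $\gamma_\nu$ but freely on the $j$-th Tate factor. The local complex in that direction is then that of $S^\infty_\infty$, i.e.\ multiplicity $0$, so the coefficient vanishes.

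\textbf{Main obstacle.} The hardest step is the rational (Morse--Bott degenerate) case. There, $\gamma_\nu$ sits in a higher-dimensional critical manifold on which several circle factors act nontrivially, and $\varepsilon(\nu)$ is not unique. I would handle this by first proving the formula for irrational ratios. I would then appeal to invariance under deformation of $\varpi$, or equivalently to a choice of Morse function on the critical manifold compatible with the chosen $\varepsilon$. The point is that the chain complex up to isomorphism depends only on the ordering data $\Omega_\nu$ and the multiplicities $\mu(\nu)$.

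As a consistency check, $\partial^2u^-_{\nu,k}=0$ holds trivially, since $\partial u^+=0$.
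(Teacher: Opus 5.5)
Your overall route matches the paper's. The term $u^+_{\nu-1,k}$ and the vanishing of $\partial u^+_{\nu,k}$ come from the ellipsoid differential \eqref{diff-ellipsoid}, because $T^m$ acts freely on the Tate factors. The term $\mu(\nu)u^+_{\nu,k-\varepsilon(\nu)}$ comes from the lens space $S^\infty_\infty/\Z_{\mu(\nu)}$ in the $i$-th factor.

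\textbf{The $j\neq i$ directions.} The step that kills the terms $u^+_{\nu,k-e_j}$, $j\neq i$, uses the wrong local model. The $j$-th circle acts trivially on $\gamma_\nu$ but freely on the $j$-th Tate factor, and you still divide by it. So that factor contributes $S^\infty_\infty/S^1=\C P^\infty_\infty$, not $S^\infty_\infty$. The coefficient vanishes because $\C P^\infty_\infty$ has one generator per level and zero differential; this is exactly the paper's argument. By contrast, $S^\infty_\infty=S^\infty_\infty/\Z_1$ is the lens space of multiplicity $1$, not $0$. Its complex is the non-equivariant Rabinowitz Floer complex of $\mu_{\kappa_j}$, which is acyclic with coefficient $1$, by the same displaceability fact you use for the first term. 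Taken literally, your model would produce extra terms $u^+_{\nu,k-e_j}$ with coefficient $1$, and it would also have too many generators. So the vanishing you assert does not follow from it.

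\textbf{The filtration step.} Your preliminary exclusion of larger drops in $k$ also has a hole. It fails wherever $\Omega$ is locally constant. For $m\geq 2$ this always happens in the block $|\nu|\leq\frac{m-1}{2}$, where $\Omega_\nu=0$, and it happens at repetitions in the rational case. There, degree and both filtrations permit, e.g., $u^+_{\nu+1,k-e_i-e_j}$ in $\partial u^-_{\nu,k}$ and $u^-_{\nu+1,k-e_i}$ in $\partial u^+_{\nu,k}$. The total action drops by $\kappa_i+\kappa_j>0$ resp.\ $\kappa_i>0$, so no energy argument excludes these terms, for generic $\kappa$ or otherwise. They have to be excluded by the local model instead. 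On the constant loops $T^m$ acts trivially, so the local complex is the Morse complex of the critical manifold tensored with copies of $\C P^\infty_\infty$. Its differential does not change $k$ at all, which is the same structural input as in the first point.
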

\textbf{Proof. }
Vanishing of $\partial u^+_{\nu,k}$ and the first term in $\partial
u^-_{\nu,k}$ follow from~\eqref{diff-ellipsoid}. 
For the second term in $\partial u^-_{\nu,k}$, let
$\varepsilon(\nu)=e_i$, so that the first
component of $\partial u^-_{\nu,k}$ corresponds to the $\mu(\nu)$-fold
covered closed Reeb orbit moving only in the $i$-th coordinate.  
As discussed in~\S\ref{harm}
(see~\cite[\S3.2]{albers-cieliebak-frauenfelder}), the differential
becomes the one on the doubly-infinite lens space
$S^\infty_\infty/\Z_{\mu(\nu)}$ in the $i$-th component, and the
doubly-infinite projective space $\C P^\infty_\infty$ in the other
components. Since the differential on $\C P^\infty_\infty$ is trivial,
this proves the lemma.
\hfill $\square$ 
\\ \\
Note that, since $\Omega_\nu$ is nondecreasing in $\nu$, the action 
$\mathfrak{A}^E_\kappa$ in~\eqref{wirk} is
nonincreasing along $\partial$.
Given $\nu \in \mathbb{Z}+\tfrac{m-1}{2}$, we define abelian groups
$$C_{2\nu+1}=\bigg\{\sum_{\substack{k \in \mathbb{Z}^m}}\xi_k u^+_{\nu
-\zeta_k,k} \;\Bigl|\; \xi_k \in 
\mathbb{Z}\bigg\}, \qquad C_{2\nu}=\bigg\{\sum_{\substack{k \in \mathbb{Z}^m}}
\xi_k u^-_{\nu-\zeta_k,k} \;\Bigl|\; \xi_k \in 
\mathbb{Z}\bigg\}.$$
To compute the homology of the chain complex $(C_*,\partial)$,
  we will construct a subcomplex from which the complex is obtained by
  the action of suitable $\mathfrak{U}$-maps.
We define 
$$\beta^+_\nu \colon \mathbb{Z} \to \mathbb{Z}^m$$
recursively by the conditions
$$\beta^+_\nu(0)=0, \qquad \beta^+_\nu(\ell-1)=\beta_\nu^+(\ell)-\varepsilon(\nu+1-\ell),
\quad \ell \in \mathbb{Z}.$$
More explicitly, we have
$$\beta^+_\nu(\ell) = \begin{cases}
  \sum_{j=1}^\ell\varepsilon(\nu+1-j) & \ell>0,\cr
  -\sum_{j=1}^{-\ell}\varepsilon(\nu+j) & \ell<0.
\end{cases}
$$
We further define
$$\beta^-_\nu \colon \mathbb{Z} \to \mathbb{Z}^m,\qquad
\beta^-_\nu(\ell):=\beta^+_{\nu-1}(\ell).$$
Note that
\begin{equation}\label{varsigma}
\zeta_{\beta^+_\nu(\ell)}=\zeta_{\beta^-_\nu(\ell)}=\ell,
\qquad \ell \in \mathbb{Z}.
\end{equation}
We define subgroups
$$B_{2\nu} \subset C_{2\nu},\qquad B_{2\nu+1} \subset C_{2\nu+1}$$
by
$$B_{2\nu+1}=\bigg\{\sum_{\ell \in \mathbb{Z}}\xi_\ell
u^+_{\nu-\ell,\beta^+_\nu(\ell)} \;\Bigl|\; \xi_\ell \in \mathbb{Z}\bigg\}, \qquad 
B_{2\nu}=\bigg\{\sum_{\ell \in \mathbb{Z}}\xi_\ell
u^-_{\nu-\ell,\beta^-_\nu(\ell)} \;\Bigl|\; \xi_\ell \in \mathbb{Z}\bigg\}.$$
For each $n \in \mathbb{Z}$ we can recover $C_n$ from $B_n$ using
suitable $\mathfrak{U}$-maps which we introduce next. For $1 \leq i
\leq m$ we define linear maps 
$$\mathfrak{U}_i \colon C_n \to C_{n+2}$$
which are given on generators by
$$\mathfrak{U}_i(u^\pm_{\nu,k})=u^\pm_{\nu,k+e_i}.$$
Note that the maps $\mathfrak{U}_i$ commute with each other as well as with the
boundary operator $\partial$. In particular, the maps $\mathfrak{U}_i$ induce
commuting maps on homology
$$\mathfrak{u}_i  \colon H_n \to H_{n+2}, \qquad 1 \leq i \leq m,$$
so that $C_*$ as well as $H_*$ get endowed with the structure of a
$\mathbb{Z}[\mathbb{Z}^m]$-module. This action does not fix the
degree. However, if we restrict to the subgroup
$$U:=\big\{k \in \mathbb{Z}^m \mid \zeta_k=0\big\} \cong \mathbb{Z}^{m-1},$$
then $\mathbb{Z}[U]$ keeps $C_n$ invariant for every $n \in \mathbb{Z}$, so that
$C_n$ gets endowed with the structure of a $\mathbb{Z}[U]$-module. We can now
express $C_n$ with the help of $B_n$ as
$$C_n=\bigg\{\sum_{\mathfrak{U} \in U} \mathfrak{U}\,\xi_\mathfrak{U} \;\Bigl|\;
\xi_\mathfrak{U} \in B_n\bigg\}.$$
From
\begin{eqnarray*}
\partial u^-_{\nu-\ell,\beta^-_\nu(\ell)}&=&u^+_{\nu-\ell-1,\beta_\nu^-(\ell)}
+\mu(\nu-\ell)u^+_{\nu-\ell,\beta^-_\nu(\ell)-\varepsilon(\nu-\ell)}\\
&=&u^+_{\nu-1-\ell,\beta_{\nu-1}^+(\ell)}
+\mu(\nu-\ell)u^+_{\nu-1-(\ell-1),\beta^+_{\nu-1}(\ell)-\varepsilon(\nu-\ell)}\\
&=&u^+_{\nu-1-\ell,\beta_{\nu-1}^+(\ell)}
+\mu(\nu-\ell)u^+_{\nu-1-(\ell-1),\beta^+_{\nu-1}(\ell-1)}
\end{eqnarray*}
we see that
$$\partial u^-_{\nu-\ell,\beta^-_\nu(\ell)} \in B_{2\nu-1},$$
so the boundary operator restricts to a map
$$\partial^B:=\partial|_{B_n} \colon B_n \to B_{n-1}, \qquad n \in \mathbb{Z}.$$
Denoting by $H_*^B$ the homology of the subcomplex $(B_*,\partial^B)$ of
$(C_*,\partial)$, we get
$$H_n=\bigg\{\sum_{\mathfrak{u}\in U}\mathfrak{u}\alpha_\mathfrak{u} \;\Bigl|\;
\alpha_\mathfrak{u} \in H^B_n\bigg\}.$$
As in the case of a single harmonic oscillator, we say that $\xi \in B_n$
has finite positive support if the coefficients $\xi_\ell$ vanish for
$\ell$ large enough, and otherwise we say that it has infinite positive
support. If the coefficients $\xi_\ell$ are bounded for positive indices
$\ell$ we say that $\xi$ is positively bounded. Since $|\mu(\nu)|$ 
is unbounded, Lemmas~\ref{nonhomo},~\ref{homo} and~\ref{noed} 
carry over to the present setting and yield the following proposition. 

\begin{prop}\label{torprop1}
For each $\nu \in \mathbb{Z}+\tfrac{m-1}{2}$ the following holds.
The group $H_{2\nu}^B$, and therefore also $H_{2\nu}$, is trivial.
If $\xi \in B_{2\nu+1}$ has finite positive support, then it
lies in the image of $\partial^B$;
if it has infinite positive support but is positively bounded, then it
does not lie in the image of $\partial^B$. In particular, the group
$H_{2\nu+1}^B$, and therefore also $H_{2\nu+1}$, is nontrivial. \hfill $\square$
\end{prop}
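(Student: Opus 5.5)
The plan is to reduce the subcomplex $(B_*,\partial^B)$ to exactly the recursion of Section~\ref{harm}, with the integer $\nu-\ell-1$ replaced by $\mu(\nu-\ell)$. From the computation just before the proposition, a chain $\xi=\sum_\ell\xi_\ell u^-_{\nu-\ell,\beta^-_\nu(\ell)}\in B_{2\nu}$ satisfies
$$\partial^B\xi=\sum_{\ell}\big(\xi_\ell+\mu(\nu-\ell-1)\xi_{\ell+1}\big)u^+_{\nu-1-\ell,\beta^+_{\nu-1}(\ell)}.$$
This follows by reindexing the second term via $\ell\mapsto\ell+1$. So $\partial^B\xi=\eta$ is equivalent to
$$\xi_\ell=\eta_\ell-\mu(\nu-\ell-1)\xi_{\ell+1},\qquad \ell\in\Z,$$
the analogue of (\ref{rec}). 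The three assertions are then proved by repeating the proofs of Lemmas~\ref{nonhomo}, \ref{homo} and~\ref{noed} with this recursion.

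\textbf{Finite positive support.} Set $\xi_\ell=0$ for $\ell>L$ and define the remaining coefficients by downward recursion. Integrality is automatic, and the telescoping computation is identical to the one in Lemma~\ref{nonhomo}.

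\textbf{Even degrees.} The kernel condition reads $\xi_\ell=-\mu(\nu-\ell-1)\xi_{\ell+1}$.
\begin{itemize}
\item[(a)] Zeros of $\mu$: since $\Omega_\nu=0$ exactly for the indices $|\nu|\le\tfrac{m-1}{2}$, we have $\mu(\nu')=0$ for some $\nu'$. This forces $\xi_\ell=0$ at that index, and downward induction then kills all $\xi_\ell$ with $\ell$ below it, exactly as in (\ref{van1}).
\item[(b)] Growth of $|\mu|$: for $\ell\to+\infty$ the index $\nu-\ell-1\to-\infty$, and $|\mu(\nu-\ell-1)|\to\infty$. Indeed, $|\Omega_{\nu'}|\to\infty$ and $\Omega_{\nu'}=\mu(\nu')\varpi_{\varepsilon(\nu')}$ with finitely many possible values $\varpi_i$. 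Hence if all $\xi_\ell\ne0$ for large $\ell$, then $|\xi_{\ell+1}|=|\xi_\ell|/|\mu(\nu-\ell-1)|\to0$, which contradicts integrality.
\end{itemize}
The claim and the downward induction of Lemma~\ref{noed} then give $\xi=0$.

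\textbf{Bounded infinite support.} This is the step needing care. In Lemma~\ref{homo} the factor $\ell+1-\nu$ grows linearly, giving decay like $N/k$. Here we have only $|\xi_{\ell+1}|\le(|\xi_\ell|+N)/|\mu(\nu-\ell-1)|$ with $|\mu|\to\infty$, not necessarily linearly, and also not necessarily nonzero for small $\ell$. I would pick $\ell_0$ such that $|\mu(\nu-\ell-1)|\ge 2$ for all $\ell\ge\ell_0$. For any $\epsilon>0$ one then enlarges $\ell_0$ so that $|\mu|\ge M$ with $M$ large. The iteration $x_{k+1}\le(x_k+N)/M$ gives
$$\limsup x_k\le N/(M-1),$$
which is less than $1$ for $M>N+1$. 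By integrality $\xi_\ell=0$ for all large $\ell$, hence $\eta_\ell=0$ for all large $\ell$ by the recursion, contradicting infinite positive support.

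\textbf{Transfer to $C_*$.} Nontriviality of $H^B_{2\nu+1}$ follows by taking $\eta=\sum_{\ell\ge0}u^+_{\nu-\ell,\beta^+_\nu(\ell)}$. Triviality and nontriviality transfer to $H_n$ through the decomposition $C_n=\bigoplus_{\mathfrak U\in U}\mathfrak U B_n$. This holds because $k\mapsto(\zeta_k,\ k-\beta(\zeta_k))$ identifies $\Z^m$ with $\Z\times U$, and $\partial$ commutes with the $\mathfrak U$-maps, so $(C_*,\partial)$ is a (completed) product of shifted copies of $(B_*,\partial^B)$ indexed by $U$.
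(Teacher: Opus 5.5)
Your proposal is correct and follows the paper's route. The paper only asserts that Lemmas~\ref{nonhomo}, \ref{homo} and~\ref{noed} carry over to the recursion $\xi_\ell=\eta_\ell-\mu(\nu-\ell-1)\xi_{\ell+1}$ on $B_*$; you supply the details, correctly using $|\mu(\nu')|\to\infty$ as $\nu'\to-\infty$ (which is what is actually needed, not mere unboundedness) and the bound $\limsup_k x_k\le N/(M-1)<1$ in place of the linear-growth estimate of Lemma~\ref{homo}. Your transfer to $H_*$ via $C_n\cong\prod_{\mathfrak{U}\in U}\mathfrak{U}B_n$ matches the paper (this is a direct product, not the $\bigoplus$ you wrote), and your step (a) on the zeros of $\mu$ is harmless but unnecessary, since the claim plus downward induction already kills every coefficient.
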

As in~\S\ref{harm}, we introduce on generators the shifted action
$$
  \wt{\mathfrak{A}}^E_\kappa(u^+_{\nu,k}) :=
  \mathfrak{A}^E_\kappa(u^+_{\nu,k}) + \frac{\nu\hbar}{2}.
$$
For a nonvanishing
$$\xi=\sum_{k \in \mathbb{Z}^m}\xi_k u^+_{\nu-\zeta_k,k} \in C_{2\nu+1}$$
we define the quantity  
$$\sigma(\xi):=\sup\big\{\wt{\mathfrak{A}}^E_\kappa
(u^+_{\nu-\zeta_k,k}) \mid
\xi_k \neq 0\big\},$$ 
and for a nonvanishing homology class $\alpha \in H_{2\nu+1}$ we
define the spectral number
$$\sigma(\alpha):=\inf\big\{\sigma(\xi) \mid \alpha=[\xi]\big\}.$$
We next discuss under which conditions the spectral numbers are finite.

\begin{prop}\label{torprop2}
For $0\neq\alpha \in H_{2\nu+1}^B$ the spectral number $\sigma(\alpha)$
is finite if and only if
$$E=\sum_{i=1}^m \big(\kappa_i-\tfrac{\hbar}{2}\big) \varpi_i.$$
\end{prop}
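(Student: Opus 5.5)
The plan is to reduce to the subcomplex $B_*$ and to reuse the three-case argument of Proposition~\ref{spec}. The new ingredient is an asymptotic formula for the shifted action along the ``staircase'' of generators $u^+_{\nu-\ell,\beta^+_\nu(\ell)}$ spanning $B_{2\nu+1}$.

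\textbf{Step 1: reduction from $C_*$ to $B_*$.} Every $k\in\Z^m$ with $\zeta_k=\ell$ can be written uniquely as $\beta^+_\nu(\ell)+k'$ with $k'\in U$. Hence $C_n=\prod_{\mathfrak U\in U}\mathfrak U B_n$ as a product of chain complexes, because $\partial$ commutes with the $\mathfrak U_i$ and preserves $B_*$. So if $\xi\in C_{2\nu+1}$ represents $0\neq\alpha\in H^B_{2\nu+1}$, its component in $B_{2\nu+1}$ (the $\mathfrak U=1$ factor) also represents $\alpha$. Since $\sigma(\xi)$ is a supremum over the support, $\sigma(\xi)$ is at least $\sigma$ of this component. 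Conversely, representatives in $B$ are representatives in $C$. Therefore $\sigma(\alpha)$ equals the infimum of $\sigma(\xi)$ over representatives $\xi\in B_{2\nu+1}$.

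\textbf{Step 2: asymptotics of the action.} Put $a(\ell):=\wt{\mathfrak A}^E_\kappa(u^+_{\nu-\ell,\beta^+_\nu(\ell)})=\langle\kappa,\beta^+_\nu(\ell)\rangle+E\Omega_{\nu-\ell}+\tfrac{(\nu-\ell)\hbar}{2}$. For $\ell>0$, the $i$-th entry of $\beta^+_\nu(\ell)$ counts the indices $j\in\{\nu,\nu-1,\dots,\nu+1-\ell\}$ with $\varepsilon(j)=e_i$. By \eqref{ant2}, these are exactly the (negative, eventually) multiples $\mu/\varpi_i$ lying between $\Omega_{\nu+1-\ell}$ and $\Omega_\nu$. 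Set $T_\ell:=-\Omega_{\nu-\ell}$. Then $\beta^+_\nu(\ell)_i=\varpi_iT_\ell+O(1)$ and $\ell=T_\ell\sum_i\varpi_i+O(1)$, where the $O(1)$ comes from floor functions and from the finitely many indices near $0$, so it is uniformly bounded in $\ell$. Substituting gives
$$a(\ell)=T_\ell\Big(\sum_{i=1}^m\big(\kappa_i-\tfrac{\hbar}{2}\big)\varpi_i-E\Big)+O(1),\qquad T_\ell\to\infty \text{ as } \ell\to\infty.$$
I expect this bookkeeping, in particular checking that the error is genuinely bounded (including the case of rational dependencies, where $\varepsilon$ is a choice), to be the main technical step. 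The key point is that every index $j$ is assigned to exactly one direction $e_i$, so the counts are exact up to a bounded error.

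\textbf{Step 3: case analysis.} Let $c$ denote the bracket above, and let $\xi\in B_{2\nu+1}$ be any representative of $\alpha\neq0$. By Proposition~\ref{torprop1}, $\xi$ has infinite positive support.

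If $c>0$, then $a(\ell)\to\infty$ along the infinite support, so $\sigma(\xi)=\infty$ and hence $\sigma(\alpha)=\infty$.

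If $c<0$, truncate as in Proposition~\ref{spec}: set $\xi^L:=\sum_{\ell\ge L}\xi_\ell u^+_{\nu-\ell,\beta^+_\nu(\ell)}$. This is cohomologous to $\xi$ by Proposition~\ref{torprop1}, and $\sigma(\xi^L)\le\sup_{\ell\ge L}a(\ell)\to-\infty$, so $\sigma(\alpha)=-\infty$.

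If $c=0$, then $a$ is bounded on $\{\ell\ge0\}$. Since $\xi$ has infinite positive support, $\sigma(\xi)\ge\inf_{\ell\ge0}a(\ell)>-\infty$. Also $\sigma(\xi^0)\le\sup_{\ell\ge0}a(\ell)<\infty$, with $\xi^0$ again representing $\alpha$. Hence $\sigma(\alpha)$ is finite.

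Combining the three cases, $\sigma(\alpha)$ is finite if and only if $c=0$, which is exactly $E=\sum_i(\kappa_i-\tfrac{\hbar}{2})\varpi_i$.
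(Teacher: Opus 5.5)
Your proposal is correct and follows essentially the same route as the paper, which also writes the shifted action along the staircase in $B_{2\nu+1}$ as a bounded term $\Gamma(\ell)$ plus $\big(\sum_i(\kappa_i-\tfrac{\hbar}{2})\varpi_i-E\big)\Omega_{\frac{m-1}{2}+\ell}$ (using the counting estimates \eqref{mainest} after normalizing to $\nu=-\tfrac{m-1}{2}$), and then runs the three cases of Proposition~\ref{spec}. Your Step~1 reduces from representatives in $C_{2\nu+1}$ to their $B$-component, and your $O(1)$ bookkeeping works for general $\nu$ and tolerates ties under rational dependencies; both make explicit points that the paper leaves implicit.
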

\textbf{Proof. }
We first introduce some useful notation. We define 
$$\iota \colon \mathbb{Z}+\tfrac{m-1}{2} \to \{1,\ldots,m\}$$
by the condition that
$$\varepsilon(\nu)=e_{\iota(\nu)}.$$
Note that in view of (\ref{ant3}) we have
$$\iota(\nu)=\iota(-\nu), \qquad \nu \in \mathbb{Z}+\tfrac{m+1}{2}.$$
Recall that an element in $B_{2\nu+1}$ has the form
$$
  \xi = \sum_{\ell \in \mathbb{Z}}\xi_\ell u^+_{\nu-\ell,\beta^+_\nu(\ell)}.
$$ 
Since $\wt{\mathfrak{A}}^E_\kappa(u^+_{\nu-\ell,\beta^+_\nu(\ell)}) =
  \mathfrak{A}^E_\kappa(u^+_{\nu-\ell,\beta^+_\nu(\ell)}) + \frac{(\nu-\ell)\hbar}{2}$,
its spectral number is given by
\begin{equation}\label{eq:sigma}
  \sigma(\xi) = \sup\{\Sigma(\ell)+\tfrac{\nu\hbar}{2}\mid \xi_\ell\neq 0\}
\end{equation} 
with the function
$$\Sigma \colon \mathbb{Z} \to \mathbb{R}, \qquad
\ell \mapsto \mathfrak{A}^E_\kappa(u^+_{\nu-\ell,\beta^+_\nu(\ell)})-\tfrac{\ell\hbar}{2}.$$
For $\ell \in \mathbb{N}$, the definition of $\beta_\nu^+$ yields
$$
  \beta^+_\nu(\ell) =  \sum_{j=1}^\ell\varepsilon(\nu+1-j) =
  \beta^+_{\nu-1}(\ell) + \varepsilon(\nu) - \varepsilon(\nu-\ell).
$$
Using this as well as~\eqref{wirk}, \eqref{ant3}, \eqref{ant1}
and~\eqref{varsigma}, we 
compute for $\ell \in \mathbb{N}$: 
\begin{eqnarray*}
\Sigma(\ell)
&=&\langle \kappa,\beta^+_\nu(\ell)\rangle+
E\Omega_{\nu-\ell}-\tfrac{\ell\hbar}{2}\\ \nonumber
&=&\bigl\langle \kappa,\beta^+_{\nu-1}(\ell) + \varepsilon(\nu) -
\varepsilon(\nu-\ell)\bigr\rangle + E\Omega_{\nu-\ell}-\tfrac{\ell\hbar}{2}\\ \nonumber
&=&\bigl\langle \kappa,\beta^+_{\nu-1}(\ell) + \varepsilon(-\nu) -
\varepsilon(\ell-\nu)\bigr\rangle - E\Omega_{\ell-\nu}-\tfrac{\ell\hbar}{2}\\ \nonumber
&=&\kappa_{\iota(-\nu)}-\kappa_{\iota(\ell-\nu)}+
\sum_{i=1}^m \big(\kappa_i-\tfrac{\hbar}{2}\big) \langle \beta^+_{\nu-1}(\ell),e_i
\rangle-E\Omega_{\ell-\nu}. 
\end{eqnarray*} 
Since we can identify the abelian groups
$H^B_{2\nu+1}$ for different $\nu$ using the
$\mathfrak{u}$-maps, which shift the action by some constant amount, it suffices
to prove the proposition for the special case
$$\nu=-\tfrac{m-1}{2}.$$
In this case, for $\ell \in \mathbb{N}$ the preceding computation gives
\begin{equation}\label{sigfun}
\Sigma(\ell)
= \kappa_{\iota\big(\frac{m-1}{2}\big)}-\kappa_{\iota\big(\frac{m-1}{2}+\ell\big)}+
\sum_{i=1}^m \big(\kappa_i-\tfrac{\hbar}{2}\big) \big\langle \beta^+_{-\frac{m+1}{2}}(\ell),e_i
\big\rangle-E\Omega_{\frac{m-1}{2}+\ell}. 
\end{equation} 
For $1\leq i\leq m$, let $\lambda_i$ be the unique positive
integer such that 
$$
  \frac{\lambda_i}{\varpi_i} \leq \Omega_{\frac{m-1}{2}+\ell} 
  < \frac{\lambda_i+1}{\varpi_i}.
$$
This means that $\lambda_i$ is the number of multiples of
$\tfrac{1}{\varpi_i}$ appearing in the nondecreasing sequence
$\Omega_{\frac{m-1}{2}+1},\dots,\Omega_{\frac{m-1}{2}+\ell}$, hence 
\begin{align*}
  \lambda_i
  &= \#\Bigl\{j\in\{1,\dots,\ell\}\mid \varepsilon\big(\tfrac{m-1}{2}+j\big) = e_i\Bigr\}
  =\sum_{j=1}^\ell \big\langle\varepsilon\big(\tfrac{m-1}{2}+j\big),e_i\big\rangle\cr
  &= \big\langle\beta^+_{-\frac{m+1}{2}}(\ell),e_i\big\rangle.
\end{align*}
This yields the inequalities
\begin{equation}\label{mainest}
\Big\langle\beta^+_{-\frac{m+1}{2}}(\ell),e_i
\Big\rangle\leq \varpi_i \cdot \Omega_{\frac{m-1}{2}+\ell}
< \Big\langle\beta^+_{-\frac{m+1}{2}}(\ell),e_i
\Big\rangle+1, \qquad 1 \leq i \leq m,
\end{equation}
and for $i=\iota(\frac{m-1}{2}+\ell)$ the equality
\begin{equation*}
  \varpi_{\iota(\frac{m-1}{2}+\ell)}\cdot \Omega_{\frac{m-1}{2}+\ell}
  = \Big\langle\beta^+_{-\frac{m+1}{2}}(\ell),e_{\iota(\frac{m-1}{2}+\ell)}\Big\rangle.
\end{equation*} 
We further introduce the function
$$\Gamma \colon \mathbb{N} \to \mathbb{R}, \qquad
\ell \mapsto \Sigma(\ell)+\bigg(E-\sum_{i=1}^m \big(\kappa_i-\tfrac{\hbar}{2}\big)\varpi_i\bigg)
\Omega_{\frac{m-1}{2}+\ell}\,.$$
Using (\ref{mainest}) and (\ref{sigfun}), we estimate for $\ell \in
\mathbb{N}$ from above,
\begin{eqnarray*}
\Gamma(\ell)&=&\Sigma(\ell)+E\Omega_{\frac{m-1}{2}+\ell}-\sum_{i=1}^m\big(\kappa_i
-\tfrac{\hbar}{2}\big)
\varpi_i \Omega_{\frac{m-1}{2}+\ell}\\ \nonumber
&\leq&\Sigma(\ell)+E\Omega_{\frac{m-1}{2}+\ell}-\sum_{i=1}^m\big(\kappa_i-\tfrac{\hbar}{2}\big)
\Big\langle \beta^+_{-\frac{m+1}{2}}(\ell),e_i\Big\rangle\\ \nonumber
&=&\kappa_{\iota\big(\frac{m-1}{2}\big)}-\kappa_{\iota\big(\frac{m-1}{2}+\ell\big)},
\end{eqnarray*}
and similarly from below,
\begin{eqnarray*}
\Gamma(\ell)
&\geq&\Sigma(\ell)+E\Omega_{\frac{m-1}{2}+\ell}-\sum_{i=1}^m\big(\kappa_i-\tfrac{\hbar}{2}\big)
\Big\langle \beta^+_{-\frac{m+1}{2}}(\ell),e_i\Big\rangle -\sum_{i=1}^m \big(\kappa_i
-\tfrac{\hbar}{2}\big)\\ \nonumber
&=&\kappa_{\iota\big(\frac{m-1}{2}\big)}-\kappa_{\iota\big(\frac{m-1}{2}+\ell\big)}
-\sum_{i=1}^m \big(\kappa_i-\tfrac{\hbar}{2}\big).
\end{eqnarray*}
So for $\ell\in\N$ we have
\begin{equation}\label{eq:Sigma}
  \Sigma(\ell)=\Gamma(\ell)+\bigg(\sum_{i=1}^m
  \big(\kappa_i-\tfrac{\hbar}{2}\big)\varpi_i-E\bigg)\Omega_{\frac{m-1}{2}+\ell} 
\end{equation}
where the function $\Gamma(\ell)$ is uniformly bounded from above and
below,
and $\Omega_{\frac{m-1}{2}+\ell}$ is nondecreasing in $\ell$ with
$$\lim_{\ell \to \infty}\Omega_{\frac{m-1}{2}+\ell}=\infty.$$
Now we argue as in the proof of Proposition~\ref{spec}, considering
$\xi\in B_{2\nu+1}$ with $[\xi]=\alpha\neq 0$ and distinguishing
three cases. 
\\ \\
\textbf{Case $E<\sum_{i=1}^m\big(\kappa_i-\tfrac{\hbar}{2}\big)\varpi_i$: }
Since $\xi$ has infinite positive support by Proposition~\ref{torprop1},
we conclude from~\eqref{eq:sigma} and~\eqref{eq:Sigma} that
$\sigma(\xi)=\infty$ and thus $\sigma(\alpha)=\infty$.
\\ \\
\textbf{Case $E=\sum_{i=1}^m\big(\kappa_i-\tfrac{\hbar}{2}\big)\varpi_i$: }
As in the proof of Proposition~\ref{spec}, we have
$[\xi^L]=[\xi]=\alpha$ for $\xi^L:=\sum_{\ell \geq L}\xi_\ell u^+_{\nu-\ell,\beta^+_\nu(\ell)}$
and it follows that $\sigma(\alpha)$ is the finite quantity
$$
  \sigma(\alpha) = \lim_{L\to\infty}\sup\{\Gamma(\ell)+\tfrac{\nu\hbar}{2}\mid \ell\geq
  L,\;\xi_\ell\neq 0\}.
$$
\textbf{Case $E>\sum_{i=1}^m\big(\kappa_i-\tfrac{\hbar}{2}\big)\varpi_i$: }
As in the proof of Proposition~\ref{spec}, it follows that 
$$
  \sigma(\alpha) = \lim_{L\to\infty}\sup\{\Sigma(\ell)+\tfrac{\nu\hbar}{2}\mid \ell\geq
  L,\;\xi_\ell\neq 0\} = -\infty.
$$ 
This proves Proposition~\ref{torprop2}. \hfill $\square$ 
\\ \\
We are now ready to prove Theorem\,$\mathrm{B}$ from the Introduction.
\\ \\
\textbf{Proof of Theorem\,$\mathrm{B}$. }
For $0\neq\alpha \in H_*$ we have 
$$\sigma(\mathfrak{u}_i\alpha)=\sigma(\alpha)+\kappa_i, \qquad 1\leq i \leq m.$$
Theorem\,$\mathrm{B}$ now follows from
Proposition~\ref{torprop1} and Proposition~\ref{torprop2}
by the same argument as in the proof of Theorem\,A. 
\hfill $\square$

\end{document}